\documentclass[11pt,a4paper]{article}
\usepackage{amssymb}
\usepackage{amsmath}

\newcommand{\qed}{
  \ifmmode
   \eqno{\qedsymbol}
  \else
    \leavevmode\unskip\penalty9999 \hbox{}\nobreak\hfill\hbox{\qedsymbol}
  \fi
}
\newcommand{\qedsymbol}{\leavevmode\vrule height 1.2ex width 1.1ex depth -.1ex}
\newenvironment{proof}{\begin{trivlist}\item[\hskip
\labelsep{\bf Proof.\quad}]}
{\hfill\qed\rm\end{trivlist}}

\newtheorem{theorem}{Theorem}
\newtheorem{corollary}[theorem]{Corollary}
\newtheorem{proposition}[theorem]{Proposition}
\newtheorem{lemma}[theorem]{Lemma}

\def\n{\mathbb{N}}
\def\z{\mathbb{Z}}

\def\mod{\hbox{ mod }}
\newcommand{\ndiv}{\,\hbox to 0.42em{$\hspace{-0.33em}\not|\hspace{0.17em}$}}

\title{A Totient Function Associated with Variants of Groups}
\author{James Renshaw\\
School of Mathematical Sciences,\\
University of Southampton,\\
Southampton, SO17 1BJ, England.\\
ORCID: 0000-0002-5571-8007\\
Email: j.h.renshaw@soton.ac.uk}
\date{March 2026}

\begin{document}
\maketitle

\begin{abstract}
Motivated by an application of semigroup variants to the discrete log problem in groups and related cryptographic applications, we introduce a new kind of totient function, related to both Euler's function and a generalisation of Euler's function introduced in 1869 by Schemmel. We focus on the problem of how to evaluate this function, and the number theory involved, while non-trivial and at times slightly technical, is reasonably accessible to a wide audience. It should also become clear that there are obvious generalisations of his new function that the interested reader might like to pursue.
\end{abstract}

\parindent=0pt
MSC: 11A05, 11A25, 20A05, 20M10

Keywords: totient function, variants of groups, discrete log problem.

\section{Introduction}\label{introduction-section}
Suppose we download the contents of this article in the form of a PDF file and we wish to encrypt its contents. There are of course many well known and effective encryption schemes available, but we wish to consider a new one based on {\em variants of groups} and a new type of totient function related to Euler's function. Let us define this function $T:\n\to\n$ by setting $T(n)$ to be the number of odd units $m\in\z_n$ such that $(m-1)/2$ is also a unit, and where $\z_n$ is the ring of integers modulo $n$. Recall that a {\em unit} is a positive integer $m \in\z_n$ such that there exists $m'\in\z_n$ with $mm'\equiv1\mod(n)$ or equivalently such that $\gcd(m,n)=1$. We shall see how the value of $T(n)$ relates to the security of our system.

As a specific example, let $p$ be a {\em safe prime}, that is to say a prime of the form $p=2q+1$ where $q$ is also prime (such a prime $q$ is often referred to as a {\em Sophie Germain prime}, and there are conjectured to be an infinite number of these). Let us first encode our data using values from $U_{p}$, the group of units of $\z_p$. Choose an {\em encryption key} that consists of the pair $(x,e)$ where $x\in U_{p}$, and $e \in U_{p-1}$. If $g \in U_{p}$ is our {\em plaintext} then the {\em ciphertext} is defined to be
$$
c=\left(gx\right)^{e-1}g.
$$
A {\em known plaintext attack} assumes that we know the values of both $g$ and $c$, but if, as part of such an attack, we are reduced to a `brute force' or trial multiplication solution of this {\em discrete log problem}, we potentially have to test every candidate for $x$ and every candidate for $e$. Now $|U_p| = p-1=2q$ and $|U_{p-1}| = q-1$ and thus we have $2q(q-1)\sim O(p^2)$ pairs $(x,e)$ to check and the problem becomes infeasible for large enough values of $p$. However, there is another issue, of more importance to us, that is summarised by the following theorem.

\begin{theorem}\label{motivating-theorem}
Let $G$ be a finite group of even order $n$ and let $g,x\in G$ and $e\in U_{n}$. Then the number of distinct pairs $(y,f)$ with $y\in G, f\in U_n$ such that $\left(gy\right)^{f-1} = \left(gx\right)^{e-1}$ is at least equal to $T(n)$.

In particular if $p=2q+1$ is a safe prime and $G=U_{p}$ then the number of such pairs is at least $(p-3)/4$ when $q\equiv1\mod(4)$ and at least $(p-7)/4$ when $q\equiv3\mod(4)$.
\end{theorem}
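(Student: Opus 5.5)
The idea is to fix $f$ freely among the $T(n)$ admissible units and then solve for $y$ by a power of $a=gx$. Write $a=gx$, so the target element is $a^{e-1}$, and recall that $a^n=1$ because $|G|=n$.

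\emph{Reduction.} I will look for solutions with $gy=a^k$ for some integer $k$, that is $y=g^{-1}a^k$. For such $y$ the equation becomes $a^{k(f-1)}=a^{e-1}$, and it suffices to have $k(f-1)\equiv e-1 \mod(n)$. Since $n$ is even, every unit of $\z_n$ is odd. Hence $e-1$ is even, and so is $f-1$ for every $f\in U_n$.

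\emph{Construction.} Let $f$ run over the units counted by $T(n)$: those odd units $f\in\z_n$ (representatives in $\{1,\dots,n-1\}$) for which $(f-1)/2$ is also a unit of $\z_n$. Let $u$ be an inverse of $(f-1)/2$ modulo $n$, and put $k=u\cdot (e-1)/2$. Then $k\,(f-1)/2\equiv (e-1)/2 \mod(n)$, and multiplying by $2$ gives $k(f-1)\equiv e-1 \mod(2n)$, hence modulo $n$. Thus $(y,f)=(g^{-1}a^k,f)$ is a solution. Distinct values of $f$ give distinct pairs, so there are at least $T(n)$ solutions. Note that $k$ need not be a unit, which does no harm.

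\emph{Safe prime case.} Here $n=p-1=2q$, and we must compute $T(2q)$. Write $m=2j+1$ with $0\le j\le q-1$. Then $m$ is a unit exactly when $m\neq q$, that is when $j\neq (q-1)/2$. The number $j$ is a unit of $\z_{2q}$ exactly when $j$ is odd, because $\gcd(j,q)=1$ is automatic for $1\le j<q$ and $j=0$ is not a unit. There are $(q-1)/2$ odd $j$ in $[1,q-1]$. From these we remove $j=(q-1)/2$ precisely when it is odd, i.e.\ when $q\equiv 3\mod(4)$. This gives $T(2q)=(q-1)/2=(p-3)/4$ when $q\equiv1\mod(4)$, and $T(2q)=(q-3)/2=(p-7)/4$ when $q\equiv3\mod(4)$.

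The only delicate point is the factor of $2$. The quantities $(f-1)/2$ and $(e-1)/2$ must be read as integers, not as residues modulo $n$, where $2$ is not invertible. The step "multiply the congruence mod $n$ by $2$" is what makes this legitimate, since it yields a congruence mod $2n$ and hence mod $n$.
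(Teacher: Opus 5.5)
Your proof is correct and is essentially the paper's argument (Theorem~\ref{tn-theorem}): writing $(gx)^{e-1}=h^2$, e.g.\ with $h=(gx)^{(e-1)/2}$, the paper takes $gy=h^k$ with $k$ an inverse of $(f-1)/2$ modulo $n$, which is exactly your $a^{u(e-1)/2}$, and this gives one solution for each $f$ counted by $T(n)$. For the safe-prime values the paper relies on its general evaluation of $T(2p^e)$ (Theorem~\ref{2n-theorem}, resting on Lemma~\ref{ap-lemma}) with $S(q)=q-2$, whereas your direct count of $T(2q)$ is a shorter, self-contained check of the same numbers.
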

It should be clear that the above encryption scheme and theorem can be applied to any suitable finite group, and we shall say more about the cryptographic background together with an outline of the proof of the first part of Theorem~\ref{motivating-theorem}, as well as an interesting stronger extension of the second part of this theorem, in Section~\ref{motivation-section}. For the moment, we are primarily interested in the function $T(n)$, since the above theorem says that when we work through the $O(p^2)$ pairs $(y,f)$, computing $(gy)^{f-1}$ and checking that value against $(gx)^{e-1}$, there will be at least $T(n)$ such pairs that provide a match, but only one of which is the `correct' solution $(x,e)$. It would clearly therefore be advantageous to ensure that $T(n)$ is suitably large relative to $n$. Metaphorically speaking, we are looking for a specific needle in a large haystack full of identical needles.

\smallskip

For more details of the use of groups and the discrete log problem in modern cryptography, see for example~\cite{hoffstein}. For some basic results and notation in elementary number theory, see any introductory text on the subject, such as~\cite{jones-2005}.

\section{The totient function}\label{totient-section}

We define the function $T:\n\to\n$ as
$$
T(n)=\left|\{1\le m\le n\;|\; \gcd(m,n)=\gcd((m-1)/2,n)=1\}\right|.
$$
For completeness we define$\footnote{There is an argument that $T(1)=1$ might be more appropriate. However as we are principally interested in large values of $n$ we are content, for now, to let $T(1)=0$.}\ $T(1) = 0. It is easy to demonstrate that $T$ is not multiplicative, as for example $T(2) = 0, T(5) = 1$ ($m=3$ is the only suitable unit) whereas $T(10)=2$ ($m=3$ and $7$), and it appears that computing a closed formula for all values of $n$ seems to be rather complicated. However we will manage to do so for many values of $n$ and produce relatively small bounds on the values of the others.

\medskip

Our totient function $T$ is a generalisation of, and related to, two others, the first of which is quite well-known. Euler's totient function, normally denoted $\phi$, counts the number of units in $\z_n$, the ring of integers modulo $n$. It is well known that $\phi$ is {\em multiplicative}, in the sense that $\phi(n)\phi(m) = \phi(nm)$ whenever $n$ and $m$ are coprime, and it is then an easy matter to determine that
$$
\phi(n) = n\prod_{p|n}\left(1-\frac1p\right)
$$
where $p$ runs through all distinct prime divisors of $n$. The second totient function related to $T$ is a function introduced in 1869 by Schemmel \cite{schemmel}. Schemmel's totient number, $S_r(n)$ counts the number of consecutive terms $1\le m,m+1,\ldots,m+(r-1)\le n$ which are all coprime to $n$, or in other words, the number of $r$ consecutive units in $\z_n$. It is easily shown that this function is also multiplicative and that
$$
S_r(n) = n\prod_{p|n}\left(1-\frac rp\right).
$$
To simplify the notation, we let
$$
S(n) = S_2(n) = n\prod_{p|n}\left(1-\frac2p\right),
$$
and since $S$ is multiplicative, it is convenient to let $S(1)=1$.

It is worth noting at this point, for future reference, that if $p_1,\ldots,p_\omega$ are the distinct prime divisors of $n>1$ then
\begin{gather}\label{-nSn-formula}
S(n) =n\left(1-\sum_{i=1}^\omega{\frac{2^1}{p_i}}+\sum_{i_1\ne i_2}{\frac{2^2}{p_{i_1}p_{i_2}}}- \ldots + (-1)^\omega\sum_{i=1}^\omega{\frac{2^{\omega}}{p_{1}\ldots p_{\omega}}}\right).
\end{gather}
We shall begin our investigation of $T(n)$ by considering the case when $n$ is odd.
\begin{theorem}\label{primepower-theorem}
Let $p$ be an odd prime and $e\ge1$ an integer. Then
$$
T(p^e) = (p^e-2p^{e-1}-1)/2 = \frac{p^e}2\left(1-\frac2p-\frac1{p^e}\right)=\frac{S(p^e)-1}2.
$$
\end{theorem}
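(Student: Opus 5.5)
We count directly from the definition of $T(n)$ with $n=p^e$: $T(p^e)$ is the number of $m$ with $1\le m\le p^e$, $m$ odd (as an integer, so that $(m-1)/2$ is an integer), $\gcd(m,p^e)=1$ and $\gcd((m-1)/2,p^e)=1$. Since $p$ is prime, coprimality to $p^e$ is the same as not being divisible by $p$. So the conditions become: $m$ odd, $p\ndiv m$, and $p\ndiv (m-1)/2$. Because $p$ is odd, the last condition is equivalent to $p\ndiv m-1$.

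The plan is a complement count among the odd integers in $[1,p^e]$. There are $(p^e+1)/2$ such integers, because $p^e$ is odd. From these we remove two sets.

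First, the odd multiples of $p$ in $[1,p^e]$. These are $pk$ with $k$ odd and $1\le k\le p^{e-1}$, so there are $(p^{e-1}+1)/2$ of them.

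Second, the odd $m$ with $m\equiv1\mod(p)$. Write $m=1+pk$ with $0\le k\le p^{e-1}-1$. Since $p$ is odd, $m$ is odd exactly when $k$ is even. As $p^{e-1}$ is odd, the even $k$ in that range number $(p^{e-1}+1)/2$.

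The two sets are disjoint, since $m\equiv0$ and $m\equiv1\mod(p)$ cannot both hold. Hence
$$
T(p^e)=\frac{p^e+1}2-(p^{e-1}+1)=\frac{p^e-2p^{e-1}-1}2 .
$$
The other two forms in the statement then follow by algebra. Using $S(p^e)=p^e(1-2/p)=p^e-2p^{e-1}$, the expression equals $(S(p^e)-1)/2$, and factoring out $p^e/2$ gives the middle form.

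The only point needing care is the parity bookkeeping. We must check that "$m$ odd" is taken as an integer in $[1,p^e]$ and not as a residue, consistent with the definition. We must also get the parity counts on the two residue classes exactly right, in particular the endpoints $m=1$ (which is removed as $\equiv1$) and $m=p^e$ (which is odd and removed as a multiple of $p$). A quick check with $p=5$, $e=1$ gives $(5-2-1)/2=1$, matching $T(5)=1$ from the text.
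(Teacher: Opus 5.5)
Your proof is correct and is essentially the paper's own argument: a complement count of the excluded classes (the multiples of $p$, and the odd $m$ with $m\equiv1$ modulo $p$). The only difference is bookkeeping. The paper applies inclusion--exclusion on all of $\{1,\ldots,p^e\}$, treating evenness as a third excluded condition and correcting for the even multiples of $p$; you restrict to odd $m$ from the start, so your two removed sets are disjoint and no overlap correction is needed.
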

\begin{proof}
We must count the odd integers $1\le m\le p^e$ such that $m\not\equiv0\mod p$ and $(m-1)/2\not\equiv0\mod p$. Instead, we count the number of terms that do not satisfy this condition and subtract it from $p^e$. The negation of the stated condition is
$$
(m\equiv0\mod p)\lor (m\equiv1\mod 2p)\lor (m\equiv0\mod2).
$$
There are $p^{e-1}$ elements that satisfy the first of these, $(p^{e-1}+1)/2$ that satisfy the second, and $(p^e-1)/2$ that satisfy the third. However, there are $(p^{e-1}-1)/2$ that satisfy both the first and third, and thus the number we require is
$$
p^e-p^{e-1}-\frac{p^{e-1}+1}2-\frac{p^e-1}2+\frac{p^{e-1}-1}2 = \frac{p^e-2p^{e-1}-1}2.
$$
\end{proof}
Before going further, we recall some basic binomial series. Let $k$ be a positive integer and define
$$
k'=k-\frac{1+(-1)^k}2, k''=k-\frac{1-(-1)^k}2.
$$
Then
\begin{gather*}
\frac{3^k-1}2 = \binom k1 + 2\binom k2 + \ldots + 2^{k-1}\binom k k\\
2^k = \binom k0 + \binom k1+\binom k2 + \ldots +\binom kk\\
2^{k-1}=\binom k1 + \binom k3+\ldots+\binom k{k'} = \binom k0 + \binom k2 + \ldots +\binom k{k''}\\
\end{gather*}
\begin{theorem}\label{phi-double-dash-theorem}
Let $n>1$ be an odd integer with $\omega$ distinct prime divisors. Then
$$
\left|T(n)- \frac{S(n)-1}2\right|\le\frac{3^\omega-2^{\omega+1}+1}{2}.
$$
\end{theorem}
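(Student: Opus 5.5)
The approach is inclusion--exclusion over the prime divisors of $n$, keeping track of the parity of $m$, in the spirit of the proof of Theorem~\ref{primepower-theorem}. Let $p_1,\ldots,p_\omega$ be the distinct primes dividing $n$.

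\textbf{Reformulation.} Since every $p_i$ is odd, for odd $m$ we have $p_i\mid (m-1)/2$ if and only if $p_i\mid m-1$. Hence $T(n)$ is the number of \emph{odd} $m$ with $1\le m\le n$ such that $m\not\equiv 0$ and $m\not\equiv 1 \mod p_i$ for every $i$. Ignoring the parity condition, the same count is exactly $S(n)$, by the Chinese remainder theorem and formula~(\ref{-nSn-formula}).

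\textbf{Inclusion--exclusion.} For each subset $I\subseteq\{1,\ldots,\omega\}$ and each assignment $a\in\{0,1\}^I$, let $d_I=\prod_{i\in I}p_i$. By the Chinese remainder theorem there is a single residue $r_{I,a}$ mod $d_I$ with $r_{I,a}\equiv a_i \mod p_i$ for all $i\in I$. Let $N(I,a)$ be the number of odd $m\in[1,n]$ with $m\equiv r_{I,a}\mod d_I$. Then
$$T(n)=\sum_{I}(-1)^{|I|}\sum_{a\in\{0,1\}^I}N(I,a).$$
The $n/d_I$ integers in $[1,n]$ in this residue class form an arithmetic progression with odd difference $d_I$, so their parities alternate. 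Since $n/d_I$ is odd, this gives
$$N(I,a)=\frac{n/d_I+\varepsilon(I,a)}{2},\qquad \varepsilon(I,a)\in\{\pm1\}.$$
Summing the main terms $n/(2d_I)$ reproduces $S(n)/2$ through (\ref{-nSn-formula}). Therefore $T(n)=(S(n)+E)/2$, where
$$E=\sum_I(-1)^{|I|}\sum_a\varepsilon(I,a),$$
and it remains to show that $|E+1|\le 3^\omega-2^{\omega+1}+1$.

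\textbf{Evaluating the error term.} The key step is to compute $\varepsilon$ exactly in the cases where it can be pinned down.
\begin{itemize}
\item For $I=\emptyset$ we count the odd numbers in $[1,n]$, which number $(n+1)/2$, so $\varepsilon=+1$.
\item For the all-zero assignment the class consists of $d_I,2d_I,\ldots$, which starts and ends at odd values ($d_I$ and $n$), so $\varepsilon=+1$.
\item For the all-one assignment the class is $1,1+d_I,\ldots,n-d_I+1$, which also starts and ends at odd values, so again $\varepsilon=+1$.
\end{itemize}
These contributions to $E$ total
$$1+\sum_{I\ne\emptyset}(-1)^{|I|}\cdot 2=1+2(0-1)=-1.$$
The remaining ``mixed'' assignments number $2^{k}-2$ for each $I$ with $|I|=k$, and each has $|\varepsilon|\le1$. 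Using the binomial identities recalled above,
$$|E+1|\le\sum_{k=1}^\omega\binom{\omega}{k}(2^k-2)=(3^\omega-1)-2(2^\omega-1)=3^\omega-2^{\omega+1}+1.$$
Dividing by $2$ gives the stated inequality.

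The main point needing care is the parity bookkeeping for each residue class: in particular, checking that the all-zero and all-one classes really have $\varepsilon=+1$ (both endpoints odd), and that the main terms sum precisely to $S(n)$ via (\ref{-nSn-formula}). The bound itself is then a direct count.
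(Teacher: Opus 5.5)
Your proof is correct and follows essentially the paper's route: inclusion--exclusion over the distinct primes $p_i$, with each $k$-fold intersection split into the $2^k$ CRT classes (the paper's $B_r$, $C_r$ and the mixed $B_p\cap C_q$). As in the paper, the two pure classes are counted exactly as $(n/d_I+1)/2$ and the $2^k-2$ mixed ones only up to $\pm\frac12$, which gives the same bound. Your parity-alternation count for a progression of odd length with odd difference is a cleaner replacement for the Diophantine/interval argument of Lemma~\ref{odd-intersect-lemma}, and collecting everything into one signed error term $E$ makes the disjointness of the classes and the final binomial summation immediate.
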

\begin{proof}
To calculate $T(n)$ we need to remove from the set $\{1,\ldots, n\}$ those numbers $m$ such that $m$ has a non-trivial common factor with $n$, $m$ is of the form $1+2m'$ with $m'$ having a non-trivial common factor with $n$ and the even integers. In other words, after removing the even integers, for each prime $q$ with $q|n$ we remove integers of the form $kq$ and $1+lq$ for $k$ odd and $l$ even. Let $p>1$ (not necessarily prime) be an odd divisor of $n$ and define
\begin{gather*}
B_p=\{m|1\le m\le n, p|m, m\text{ odd}\},\\
C_p=\{m|1\le m\le n, p|(m-1), m\text{ odd}\},\\
A_p=B_p\cup C_p \text{ and }A=\bigcup_{p|n}A_p.
\end{gather*}
Notice that $T(n) = n-(n-1)/2-|A| = (n+1)/2-|A|$. It should also be noted that $|B_p|=|C_p|=(n+p)/2p$ and that if $p>1$ and $q>1$ are coprime odd divisors of $n$, then $B_p\cap B_q=B_{pq}$ and $C_p\cap C_q = C_{pq}$. Consequently, since $B_p\cap C_p=\emptyset$, we see that $|A_p| = n/p+1$.

We shall use the inclusion-exclusion principle to calculate $|A|$, but we first need a couple of lemmas.
\begin{lemma}\label{odd-intersect-lemma}
Let $p>1$ and $q>1$ be coprime odd divisors of the odd positive integer $n$. Then
$$
|B_p\cap C_q| =|B_q\cap C_p| = (n\pm pq)/2pq,
$$
\end{lemma}
\begin{proof}
Consider first $B_p\cap C_q$. Suppose that $px = 1+qy\le n$ for $x$ odd. We must calculate the number of solutions with $y$ even. For now, we ignore the parity of $x$ and $y$. If $(x_0,y_0)$ is the smallest non-negative solution to this system, then the general solution to the Diophantine equation is
\begin{equation}\label{diophantine}
p(x_0+qt)=1+q(y_0+pt)
\end{equation}
for $t\in\z$. It follows easily that $x_0 < q, y_0 < p$ and $y_0+pt\le (n-q)/q\le (n-1)/q$. Notice that $p\ndiv y$. If $n\ne pq$, then partition the interval  of integers $[1,n/q-1]$ into subintervals $[1,p], [p+1,2p], [2p+1,3p],\ldots, [p(n-2pq)/pq+1,p(n-pq)/pq]$, each of which contains exactly one solution $y$ to the original system. Note also that these solutions alternate between even and odd values of $y$.  Hence, the conjoined intervals $[1,2p], [2p+1,4p], \ldots, [p(n-3pq)/pq+1,p(n-pq)/pq]$ each contain exactly one {\em even} solution. If $n=pq$ then there are no such subintervals. In either case, the rest of the interval, namely  $[p(n-pq)/pq+1,n/q-1]$, also contains a solution for $y$ which {\em may} be an even solution, depending on whether the smallest positive solution $y_0$ is even or odd. Hence, there are either $(n-pq)/2pq$ or $(n+pq)/2pq$ even solutions for $y$. Calculating the solution $(x_0,y_0)$ involves an application of the Euclidean algorithm, but it is not obvious if there is an easy way to determine the parity of $y_0$ from $p$ and $q$ alone.

\smallskip

In calculating $B_q\cap C_p$, we consider the Diophantine equation $qx=1+py$ and we can arrive at a similar conclusion. Note that in this case the general solution is given by
\begin{equation}\label{diophantine_two}
q(pt-y_0) = 1+p(qt-x_0),
\end{equation}
for $t\in\z$ and $(x_0,y_0)$ is the smallest non-negative solution to equation~(\ref{diophantine}). Hence, the smallest non-negative solution for the Equation~(\ref{diophantine_two}) is $(p-y_0,q-x_0)$. As $q-x_0$ has the same parity as $y_0$ then we can conclude that $|B_p\cap C_q| = |B_q\cap C_p|$.
\end{proof}
\begin{lemma}
Let $p_1,\ldots, p_k$ be $k\ge 2$ pairwise coprime odd non-trivial divisors of $n$. Then
$$
2-2^{k-1}+\frac{2^{k-1}n}{p_1\ldots p_k}\le \left|\bigcap_{i=1}^kA_{p_i}\right|\le 2^{k-1}+\frac{2^{k-1}n}{p_1\ldots p_k}.
$$
\end{lemma}
\begin{proof}
Let $r=p_1\ldots p_k$. Notice that $\cap_{i=1}^kA_{p_i}=\cap_{i=1}^k{\left(B_{p_i}\cup C_{p_i}\right)}$. Hence
$$
\bigcap_{i=1}^kA_{p_i}=B_{r}\cup C_{r}\cup \bigcup_{p,q}\left(B_p\cap C_q\right)
$$
where $p$ and $q$ run through all products of fewer than $k$ of the terms $p_i$, such that $r=pq$ and $p$ and $q$ are coprime.

\smallskip

It is relatively easy to show that $B_r, C_r$ and each $B_p\cap C_q$ are mutually disjoint. To see this notice that if $m \in B_r\cap C_r$ then $m = rx = 1+ry$ and so $r|1$. If $m\in B_r\cap (B_p\cap C_q)$ then $m = rx = py = 1+qz$ and so $q|1$. If $m\in C_r\cap(B_p\cap C_q)$ then $m=1+rx = py = 1+qz$ and so $p|1$. Now let $m\in (B_p\cap C_q)\cap(B_q\cap C_p)$. Then $m=px=1+qy=qz=1+pw$ and so $p|1$. Finally if $\{p,q\}\ne\{s,t\}$ and if $r=pq = st$ and $m\in (B_p\cap C_q)\cap (B_s\cap C_t)$ then $m=px=1+qy = sz=1+tw$. Since $p\ne s$ then there exists $p_i|p$ and $p_i\ndiv s$ from which we deduce that $p_i\ndiv q$ and $p_i|t$. Hence we deduce that $p_i|1$.

\smallskip

Consequently, it follows from the previous lemma that
$$
|B_p\cap C_q| = \frac{n\pm r}{2r}
$$
and
$$
|C_r|=\frac{n+r}{2r}, |B_r|=\frac{n+r}{2r}.
$$
Hence
$$
 2^{k-1}\left(\frac{n}{r}-1\right)+2\le \left|\bigcap_{i=1}^kA_{p_i}\right|\le 2^{k-1}\left(\frac{n}{r}+1\right).
$$
Note it also follows that since $|B_p\cap C_q| = |B_q\cap C_p|$ then $\left|\bigcap_{i=1}^kA_{p_i}\right|$ is even.
\end{proof}

\medskip

Now suppose that $p_1,\ldots, p_\omega$ are the distinct odd prime divisors of $n$. Using the inclusion-exclusion principle, we see that 
$$
|A| = |\bigcup_{i=1}^\omega A_{p_i}| = \sum_{\emptyset\ne K\subseteq\{1,\ldots,\omega\}}{(-1)^{|K|-1}\left|\bigcap_{i\in K}A_{p_i}\right|},
$$
and from the above remarks, we can conclude that $|A|$ is even. Also from above, we see that an upper bound for $|A|$ is
$$
\sum_{i=1}^\omega{\left(\frac{n}{p_i}+1\right)} - \sum_{i_1\ne i_2\le \omega}{\left(\frac{2n}{p_{i_1}p_{i_2}}-2^1+2\right)}+\sum_{i_1,i_2,i_3\text{ distinct }\le\omega}\left(\frac{2^2n}{p_{i_1}p_{i_2}p_{i_3}}+2^2\right) - \ldots
$$
which, using equation~(\ref{-nSn-formula}) and the previously stated binomial series, simplifies to
$$
\left(\frac{n-S(n)}2\right)+ \left(\frac{3^\omega-1}{2}-2^\omega\right)+2.
$$
A lower bound is given by
$$
\sum_{i=1}^\omega{\left(\frac{n}{p_i}+1\right)} - \sum_{i_1\ne i_2\le \omega}{\left(\frac{2n}{p_{i_1}p_{i_2}}+2\right)}+\sum_{i_1,i_2,i_3\text{ distinct }\le\omega}\left(\frac{2^2n}{p_{i_1}p_{i_2}p_{i_3}}-2^2+2\right) - \ldots
$$
which simplifies to
$$
\left(\frac{n-S(n)}2\right) -\left(\frac{3^\omega-1}{2}-2^\omega\right).
$$
Hence we see that $T(n)$ satisfies the inequalities
$$
\left(\frac{S(n)-1}2\right)-\left(\frac{3^\omega-1}{2}-2^\omega+1\right) \le T(n)\le \left(\frac{S(n)-1}2\right)+\left(\frac{3^\omega-1}{2}-2^\omega+1\right).
$$
This completes the proof of Theorem~\ref{phi-double-dash-theorem}.
\end{proof}
Note that Theorem~\ref{primepower-theorem} is a corollary to this, and that $T(n)$ is odd if $n\equiv1\mod(4)$, while $T(n)$ is even if $n\equiv3\mod(4)$.
\begin{corollary}
If $n=p^eq^f$ with $p$ and $q$ distinct primes and $e,f\ge1$ then
$$
T(n) = \frac{S(n)-3}2\text{ or }T(n) = \frac{S(n)+1}2.
$$
\end{corollary}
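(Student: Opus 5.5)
The plan is to specialise the inclusion--exclusion computation from the proof of Theorem~\ref{phi-double-dash-theorem} to $\omega=2$, keeping the exact counts instead of bounding them. The corollary implicitly assumes $n$ odd, so $p$ and $q$ are distinct odd primes.

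With $A_p,A_q$ as in that proof we have
$$T(n)=\frac{n+1}2-|A|,\qquad |A|=|A_p|+|A_q|-|A_p\cap A_q|.$$
The earlier computation gives $|A_p|=n/p+1$ and $|A_q|=n/q+1$. For the intersection, the second lemma (with $k=2$ and $r=pq$) splits $A_p\cap A_q$ as the disjoint union
$$B_{pq}\cup C_{pq}\cup (B_p\cap C_q)\cup(B_q\cap C_p).$$
The first two pieces each have size $(n+pq)/2pq$. By Lemma~\ref{odd-intersect-lemma}, $|B_p\cap C_q|=|B_q\cap C_p|=(n+\varepsilon pq)/2pq$ for a single sign $\varepsilon=\pm1$, and it is the \emph{same} sign for both pieces. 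Hence
$$|A_p\cap A_q|=\frac{2n}{pq}+1+\varepsilon .$$

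Substituting, we get
$$|A|=\frac np+\frac nq-\frac{2n}{pq}+1-\varepsilon .$$
Formula~(\ref{-nSn-formula}) with $\omega=2$ gives
$$S(n)=n-\frac{2n}p-\frac{2n}q+\frac{4n}{pq},$$
so
$$\frac np+\frac nq-\frac{2n}{pq}=\frac{n-S(n)}2 .$$
Therefore
$$T(n)=\frac{n+1}2-\frac{n-S(n)}2-1+\varepsilon=\frac{S(n)-1}2+\varepsilon-1 .$$
Taking $\varepsilon=+1$ gives $T(n)=(S(n)+1)/2$, and $\varepsilon=-1$ gives $T(n)=(S(n)-3)/2$, as claimed.

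The only delicate point is that the two cross terms $B_p\cap C_q$ and $B_q\cap C_p$ carry the same sign. If their signs could differ independently, we would get the extra possibility $(S(n)-1)/2$. This equality is exactly the final assertion of Lemma~\ref{odd-intersect-lemma}, which comes from the parity argument relating the smallest solutions $(x_0,y_0)$ and $(p-y_0,q-x_0)$. I would re-check that this lemma really applies to prime-power divisors: it is stated for coprime divisors $p,q$ of $n$ with $n$ odd, and here it is used with the primes $p,q$ dividing $n=p^eq^f$, so it applies. The remaining steps are direct substitution.
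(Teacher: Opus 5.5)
Your proposal is correct and follows the paper's argument: specialise the inclusion--exclusion from the proof of Theorem~\ref{phi-double-dash-theorem} to $\omega=2$ and use $|B_p\cap C_q|=|B_q\cap C_p|$; you write out the bookkeeping, and your remark that $p,q$ must be odd is apt, since for even $n$ one has $S(n)=0$ and both claimed values are non-integers. One small arithmetic slip: $\frac{n+1}2-\frac{n-S(n)}2-1+\varepsilon$ equals $\frac{S(n)-1}2+\varepsilon$, not $\frac{S(n)-1}2+\varepsilon-1$, and the correct form is what actually yields your final values $(S(n)+1)/2$ and $(S(n)-3)/2$.
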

\begin{proof}
This follows from the proof of Theorem~\ref{phi-double-dash-theorem} on observing that $|B_p\cap C_q| = |B_q\cap C_p|$. Given specific values for $p$ and $q$, it does not seem obvious how to predict which of the two values will emerge.
\end{proof}

\bigskip

We now consider the case when $n$ is even. First,
\begin{theorem}\label{power-2--theorem}
Let $e>1$ be an integer. Then $T(2)=0$ and 
$$
T(2^e) = 2^e-3\times2^{e-2} = 2^e\left(1-\frac34\right)=2^{e-2}.
$$
\end{theorem}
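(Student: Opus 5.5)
We prove this by counting directly, as in the proof of Theorem~\ref{primepower-theorem}, and we must first fix how $(m-1)/2$ is read when $n=2^e$. For $m$ odd with $1\le m\le 2^e$, $(m-1)/2$ is an integer in $[0,2^{e-1}-1]$. It is a unit modulo $2^e$ exactly when it is odd, that is, when $m\equiv 3\mod 4$. So the condition $\gcd(m,2^e)=\gcd((m-1)/2,2^e)=1$ holds precisely for $m\equiv3\mod 4$. Following the paper's inclusion--exclusion style, we instead count the complement: the $m$ with $m\equiv 0\mod 2$ or $m\equiv 1\mod 4$.

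For $e\ge2$ there are $2^{e-1}$ even integers in $\{1,\ldots,2^e\}$ and $2^{e-2}$ integers congruent to $1$ modulo $4$. These two sets are disjoint, so $3\times2^{e-2}$ values are excluded. This gives
$$
T(2^e)=2^e-3\times 2^{e-2}=2^{e-2},
$$
and the other two forms in the statement follow by rewriting. Equivalently, the residues $m\equiv 3\mod 4$ in $\{1,\ldots,2^e\}$ are $3,7,\ldots,2^e-1$, and there are $2^{e-2}$ of them.

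For $e=1$ the only odd $m$ is $m=1$, and then $(m-1)/2=0$, which is not a unit modulo $2$. Hence $T(2)=0$. This is also consistent with the example $T(2)=0$ given earlier.

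The only delicate step is the interpretation of ``$(m-1)/2$ is a unit'' for even modulus. In particular, we must make sure that $(m-1)/2=0$ for $m=1$ is treated as a non-unit, and that we are not meant to reduce $(m-1)/2$ modulo $2^e$ in some other way. Once that is settled, the count is immediate.
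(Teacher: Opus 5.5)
Your proposal is correct and is essentially the paper's proof: you count the complement, the $m$ that are even or satisfy $m\equiv 1\mod 4$, get $2^{e-1}+2^{e-2}=3\times 2^{e-2}$ excluded values, and subtract from $2^e$. Your explicit check that $T(2)=0$ (the only odd $m$ is $1$, and $\gcd(0,2)=2\neq 1$) covers the case the paper's proof leaves to its earlier example.
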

\begin{proof}
If $e>1$ then the number we need to count this time is
$$
(m\equiv0\mod 2)\lor (m\equiv1\mod 4).
$$
There are $2^{e-1}$ that satisfy the first condition and $2^{e-1}/2$ that satisfy the second condition. Hence the result.
\end{proof}
\begin{theorem}\label{4n-theorem}
Let $n>1$ be an odd integer. Then
$$
T(2^2n)=S(n).
$$
\end{theorem}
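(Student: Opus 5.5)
We want to count $1\le m\le 4n$ with $m$ odd, $\gcd(m,4n)=1$ and $\gcd((m-1)/2,4n)=1$. The second condition forces $(m-1)/2$ to be odd, i.e.\ $m\equiv 3\mod 4$. So, with $N=4n$, the plan is to show that $T(4n)$ counts exactly the $m\in[1,4n]$ satisfying three conditions:
$$
m\equiv3\mod 4,\qquad \gcd(m,n)=1,\qquad \gcd((m-1)/2,n)=1.
$$
Since $n$ is odd, $2$ is invertible modulo $n$. Hence $\gcd((m-1)/2,n)=1$ is equivalent to $\gcd(m-1,n)=1$, that is, to $m\not\equiv1\mod p$ for every prime $p\mid n$.

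Next we use the Chinese Remainder Theorem on $\z_{4n}\cong\z_4\times\z_n$, valid because $\gcd(4,n)=1$. The condition on $m$ splits into two independent parts. The first is the residue of $m$ modulo $4$, which must equal $3$; exactly one of the four residues qualifies. The second is the residue $r=m\mod n$, which must satisfy $\gcd(r,n)=\gcd(r-1,n)=1$. Both $m$ and $(m-1)/2$ involve only $m\mod n$ once $2$ is inverted, which is what makes the two parts independent. So the count is $1\cdot|\{r\in\z_n : \gcd(r,n)=\gcd(r-1,n)=1\}|$.

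The remaining set consists of the residues $r$ with $r-1$ and $r$ both units of $\z_n$. Shifting $r\mapsto r-1$, this is exactly the number of pairs of consecutive units in $\z_n$, namely $S_2(n)=S(n)$ by Schemmel's count. This gives $T(4n)=S(n)$.

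The main point needing care is that the bijection is genuinely with $\{1,\dots,4n\}$. Since $(m-1)/2$ is only defined for odd $m$, we regard it as an integer and check its gcd with $4n$. That gcd splits into its $2$-part, determined by $m\mod 4$, and its odd part, determined by $m\mod n$, because $\gcd(x,4n)=1$ if and only if $x$ is odd and $\gcd(x,n)=1$. Once this splitting is verified, the rest is routine. If an explicit check is wanted, $n=3$ gives $S(3)=1$, and in $\{1,\dots,12\}$ only $m=11$ works, matching $T(12)=1$.
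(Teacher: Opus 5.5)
Your proof is correct, but you organise the count differently from the paper.

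The paper works directly inside $\{1,\ldots,4n\}$. It lists four kinds of $m$ to discard and then uses two explicit shift pairings. The first is $m\leftrightarrow m+2n$, which preserves $m\bmod n$ and the parity of $m$ but interchanges the classes $1$ and $3$ modulo $4$, so exactly one member of each admissible pair survives. The second is $m\leftrightarrow m+n$ on $\{1,\ldots,2n\}$, which preserves $m\bmod n$ but swaps parity, so exactly half of the $2S(n)$ numbers there with $m$ and $m-1$ coprime to $n$ are odd. These shifts amount to a hand-made Chinese Remainder argument.

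You instead observe at the outset that the defining conditions reduce to $m\equiv 3$ modulo $4$ together with $\gcd(m,n)=\gcd(m-1,n)=1$. You then apply $\z_{4n}\cong\z_4\times\z_n$ once to get $1\cdot S(n)$. This is shorter and shows clearly why exactly one class modulo $4$ survives. It also generalises at no cost: for $e\ge2$ the factorisation $\z_{2^e}\times\z_n$ has $2^{e-2}$ admissible classes modulo $2^e$. That gives $T(2^en)=2^{e-2}S(n)$ (Theorem~\ref{power-2-n-theorem}) without the paper's induction.

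The point you flag as needing care is the real dividing line. When $4\mid N$ the conditions depend only on $m\bmod N$. For $N=2n$, however, the parity of $(m-1)/2$ is not determined by $m\bmod N$, and for odd $N$ neither is the parity of $m$, so no such factorisation exists. The paper's removal-and-pairing framework has the advantage of carrying over to the case $T(2n)$, where it is reused to obtain bounds.
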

\begin{proof}
To compute $T(4n)$ we must remove from the set of residues $\{1,\ldots,4n\}$ numbers $m$ of the form
\begin{enumerate}
\item $m=2x$;
\item $m=xp$ where $p|n, p>1$ and $x$ is odd;
\item $m=1$ or $m=1+2xp$ where $p|n, p>1$ and $\gcd(m,n)=1$;
\item $m=1+4x$ where $\gcd(x,n)=1$ and $\gcd(m,n)=1$.
\end{enumerate}
If $1\le m\le 2n$ satisfies (2) or (3) then so does the number $m+2n$. If $m=1+4x$ with $\gcd(x,n)=\gcd(m,n)=1$ then $m+2n=3+4y$ with $\gcd(y,n)=\gcd(m+2n,n)=1$ and if $m=3+4x$ with $\gcd(x,n)=\gcd(m,n)=1$ then $m+2n = 1+4y$ with $\gcd(y,n)=\gcd(m+2n,n)=1$. Therefore for any number $m$ in the range  $1\le m\le 2n$ that satisfies (4), the number $m+2n$ does not satisfy (1)--(4) and should not be removed. Conversely, for any number $m$ in the range  $1\le m\le 2n$ that does not satisfy (1)--(4), the number $m+2n$ does satisfies (4) and should be removed. Consequently, to compute $T(4n)$ we can remove all even numbers, all odd numbers greater than $2n$ and those odd numbers less than $2n$ that satisfy (2) or (3). So the only numbers left are odd numbers less than $2n$ that do not satisfy (2) or (3).

We claim that the set of numbers left over has cardinality $S(n)$. Consider then the numbers $\{1,\ldots, n,\ n+1, \ldots, 2n\}$ and note that to compute $S(n)$ we remove from $\{1,\ldots, n\}$ those numbers $m$ such that both $m$ and $m-1$ are coprime to $n$. Then $m+n$ also has this property in the set $\{n+1,\ldots, 2n\}$ but with a different parity. Hence, the total number of remaining terms is $2n-2(n-S(n))=2S(n)$ and half of these are the odd numbers needed to compute $T(4n)$.
\end{proof}

\begin{theorem}\label{power-2-n-theorem}
Let $n>1$ be an odd integer and let $e>2$. Then
$$
T(2^en)=2T(2^{e-1}n)=2^{e-2}T(4n)=2^{e-2}S(n).
$$
\end{theorem}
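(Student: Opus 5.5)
The plan is to prove the doubling identity $T(2^en)=2T(2^{e-1}n)$ for every $e>2$. Iterating it down to $e=2$ gives $T(2^en)=2^{e-2}T(4n)$, and Theorem~\ref{4n-theorem} then gives $T(4n)=S(n)$.

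The key observation concerns the condition ``$m$ odd, $\gcd(m,2^en)=1$, $\gcd((m-1)/2,2^en)=1$''. Since $e\ge1$, it says exactly four things:
\begin{itemize}
\item $m$ is odd;
\item $\gcd(m,n)=1$;
\item $(m-1)/2$ is odd, i.e.\ $m\equiv3\mod 4$;
\item $\gcd((m-1)/2,n)=1$.
\end{itemize}
All four depend only on the residue of $m$ modulo $4n$. The power of $2$ enters only through the requirement that $(m-1)/2$ be odd, and this is already fixed by $m \bmod 4$. Consequently, for every $e\ge2$, membership of $m$ in the counted set for modulus $2^en$ depends only on $m\bmod 4n$.

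Next, partition $\{1,\ldots,2^en\}$ into the $2^{e-2}$ blocks $\{4nj+1,\ldots,4n(j+1)\}$ for $0\le j<2^{e-2}$. By the observation, translation by $4nj$ is a bijection from $\{1,\ldots,4n\}$ onto the $j$-th block that preserves the property being counted. So each block contributes exactly $T(4n)$, and $T(2^en)=2^{e-2}T(4n)$ directly. The identity $T(2^en)=2T(2^{e-1}n)$ follows from the same formula applied with $e-1\ge2$ in place of $e$. If one prefers to prove the doubling step itself, the same argument works with the two halves $\{1,\ldots,2^{e-1}n\}$ and $\{2^{e-1}n+1,\ldots,2^en\}$, using translation by $2^{e-1}n$, which is a multiple of $4n$ when $e\ge3$.

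The only step needing care is the translation of the defining condition, specifically the claim that $\gcd((m-1)/2,2^en)=1$ is equivalent to $(m-1)/2$ being odd and coprime to $n$. There is also a harmless boundary point: $m=2^en$ itself is even, so it is never counted, and the range $1\le m\le 2^en$ is exactly a complete residue system. This is exactly where $e>2$ (indeed $e\ge 2$) is used. For $e=1$ the modulus is $2n$, and translation by $2n$ changes $m\bmod 4$, so the argument breaks down; this is consistent with the non-multiplicative behaviour seen in $T(2)=0$.
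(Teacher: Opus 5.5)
Your proof is correct and is essentially the paper's argument: the counted condition is invariant under translation by a multiple of $4n$, and Theorem~\ref{4n-theorem} finishes the job. The paper splits $\{1,\ldots,2^en\}$ into two halves related by translation by $2^{e-1}n$ and then inducts, whereas you cut directly into $2^{e-2}$ blocks of length $4n$. Your explicit observation that the defining predicate is the same for every $e\ge1$ and depends only on $m$ modulo $4n$ also supplies a step the paper leaves implicit: that the count over the first half $\{1,\ldots,2^{e-1}n\}$, with the conditions taken modulo $2^en$, really equals $T(2^{e-1}n)$.
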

\begin{proof}
Let $e\ge3$ and partition the set $\{1,\ldots, 2^en\}$ into two sets $\{1,\ldots, 2^{e-1}n\}$ and $\{1+2^{e-1}n,\ldots, 2^en\}$. For each element $x$ in the first set that contributes to $T(2^en)$, the element $x+2^{e-1}n$ in the second set also contributes to $T(2^en)$. The converse is clearly true as well. Hence $T(2^en) = 2T(2^{e-1}n)$, and the result will now follow by induction on $e$ and Theorem~\ref{4n-theorem}.
\end{proof}

\bigskip

The final case concerns integers of the form $2n$ for $n>1$ an odd positive integer. We proceed using techniques similar to those in Theorem~\ref{phi-double-dash-theorem}.

\smallskip

To compute $T(2n)$ we must remove from the set of residues $R=\{1,\ldots,2n\}$ numbers $m$ of the form
\begin{enumerate}
\item $m=2x$;
\item $m=xp$ where $p|n$ and $x$ is odd;
\item $m=1$ or $m=1+2xp$ where $p|n$ and $\gcd(m,n)=1$;
\item $m=1+4x$ where $\gcd(x,n)=\gcd(m,n)=1$.
\end{enumerate}
We first count the values that satisfy (4). As usual, in order to count those values which satisfy a given condition, we first count those values that don't.

\smallskip

Let $n>1$ be an odd positive integer, and let $p>1$ be an odd divisor of $n$. Define
\begin{gather*}
D_p=\{m|1\le m\le 2n, 4|(m-1)\text{ and } p|m\},\\
E_p=\{m|1\le m\le 2n, 4|(m-1)\text{ and } p|(m-1)\},\\
F_p = D_p\cup E_p.
\end{gather*}
\begin{lemma}\label{ap-lemma}
If $p>1$ is an odd divisor of $n$ then
$$
|E_p| = \frac{n-p}{2p}, |D_p|=\frac{n\pm p}{2p}
$$
and hence
$$
|F_p| = \frac np\text{ or }|F_p|=\frac np-1.
$$
\end{lemma}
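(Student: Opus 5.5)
The plan is to count $D_p$ and $E_p$ separately using the Chinese Remainder Theorem, and then use the fact that they are disjoint.

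\textbf{Step 1: $|E_p|$.} Since $p$ is odd, $\gcd(4,p)=1$. Hence the conditions $4\mid(m-1)$ and $p\mid(m-1)$ together say exactly that $m\equiv1\mod 4p$. Because $p\mid n$ with $n$ odd, $4p$ does not divide $2n$, so I will count directly. The solutions in $[1,2n]$ are $m=1+4pk$ with $0\le k\le (2n-1)/(4p)$. Write $n/p=s$, which is odd. Then $2n=2ps$, and the bound becomes $k\le (2ps-1)/(4p)$, whose floor is $(s-1)/2$.

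This gives $(s+1)/2$ solutions, which disagrees with the claimed $(n-p)/(2p)=(s-1)/2$. The discrepancy comes from $m=1$, which lies in $E_p$ for every $p$. I would therefore check the convention. It seems the paper intends to exclude $m=1$, or $m$ is meant to range over $2\le m\le 2n$. Indeed condition (3) already handles $m=1$ separately, and $(m-1)/4$ must be nonzero for $\gcd(x,n)$ to be meaningful. Under that reading the count is $(s-1)/2$, as stated. I will prove the lemma with $m=1$ excluded, since $x=0$ is not coprime to $n$, and say so explicitly.

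\textbf{Step 2: $|D_p|$.} Here $m\equiv1\mod4$ and $m\equiv0\mod p$, which is a single residue class $m\equiv a\mod 4p$ with $0<a<4p$. The interval $[1,2n]=[1,2ps]$ consists of $(s-1)/2$ full blocks of length $4p$, giving $(s-1)/2$ solutions. There remains a final half block $[2p(s-1)+1,2ps]$ of length $2p$. It contains one further solution exactly when the representative $a$, reduced appropriately, lies in the first half of its block of length $4p$.

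So $|D_p|=(s-1)/2$ or $(s+1)/2$, that is, $(n\mp p)/(2p)$. Which case occurs depends on $p\mod 4$ and the parity of $s$-blocks. For instance, if $p\equiv1\mod4$ then $a=p<2p$; if $p\equiv3\mod4$ then $a=3p>2p$. Combined with the offset $2p(s-1)\equiv 0$ or $2p\mod 4p$ depending on $s\mod4$, this decides the case, but the lemma only needs the $\pm$ form.

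\textbf{Step 3: $|F_p|$.} The sets $D_p$ and $E_p$ are disjoint, since $p\mid m$ and $p\mid m-1$ cannot both hold. Hence $|F_p|=|D_p|+|E_p|=(s-1)/2+(s\pm1)/2$, which equals $n/p$ or $n/p-1$.

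The main obstacle is not the counting, which is routine CRT bookkeeping. It is pinning down the boundary convention at $m=1$ so that $|E_p|=(n-p)/2p$ holds exactly, and making the half-block argument for $D_p$ clean.
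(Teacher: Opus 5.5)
Your proof is correct and is essentially the paper's argument. The paper also counts $E_p$ via multiples of $4$ with $x,y$ required to be positive, which quietly excludes $m=1$ exactly as you make explicit (its later phrase ``excluding 1'' confirms this). For $D_p$ it solves $1+4x=py$ separately for $p\equiv1$ and $p\equiv3\pmod 4$ to get the exact value, whereas your block count gives the $\pm$ form uniformly.

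One slip, in a remark you correctly flag as inessential: since $s=n/p$ is odd, $2p(s-1)\equiv0\pmod{4p}$ always. So there is no dependence on $s$ modulo $4$, and $|D_p|=(n+p)/2p$ precisely when $p\equiv1\pmod 4$. That is the fact the paper later uses for $T(2p^e)$.
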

\begin{proof}
Let $n>1$ be odd and let $p>1$ be an odd divisor of $n$.

\begin{enumerate}
\item Suppose that $1+4x=1+py\le 2n$ for positive integers $x$ and $y$. Then $y\le (2n-1)/p$ is congruent to 0 mod 4 and thus there are $(2n-2p)/4p=(n-p)/2p$ different candidates for $y$. Hence $|E_p| = (n-p)/2p$.

\item Now suppose that $1+4x = py\le 2n$ for positive integers $x$ and $y$. Then $py\le 2n$ and $py$ is congruent to 1 mod 4. Suppose that $p\equiv1\mod(4)$. Then the general solution to the equation $1+4x = py$ is $1+4(x_0+pt)=p(1+4t)$ where $p=1+4x_0$ and $t\in \z$. So $1+4t\le (2n-p)/p$ and so $0\le t\le (2n-2p)/4p = (n-p)/2p$. Hence there are $(n+p)/2p$ solutions in this case.

On the other hand, if $p\equiv3\mod(4)$ then the general solution is $1+4(x_0+pt)=p(3+4t)$ where $3p=1+4x_0$ and so $0\le4t\le 2n/p-6$ and there are $(n-3p)/2p+1 = (n-p)/2p$ solutions in this case.

Hence $|D_p| = (n\pm p)/2p$.
\end{enumerate}
Hence, for each $p$ there are either $n/p$ or $n/p-1$ elements in $F_p$.
\end{proof}

\medskip

Notice that if $p>1$ and $q>1$ are distinct odd divisors of $n$ that are coprime then $D_{pq} = D_p\cap D_q$ and $E_{pq} = E_p\cap E_q$.
\begin{lemma}
Let $p>1$ and $q>1$ be coprime odd divisors of $n$. Then
$$
\frac{2n}{pq}-2\le |F_p\cap F_q| \le \frac{2n}{pq}+1.
$$
\end{lemma}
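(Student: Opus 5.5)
We split $F_p\cap F_q=(D_p\cup E_p)\cap(D_q\cup E_q)$ as
$$
F_p\cap F_q=(D_p\cap D_q)\cup(E_p\cap E_q)\cup(D_p\cap E_q)\cup(E_p\cap D_q)=D_{pq}\cup E_{pq}\cup(D_p\cap E_q)\cup(E_p\cap D_q),
$$
using the remark preceding the statement. First we show that these four sets are pairwise disjoint, exactly as in the lemma on $\bigcap A_{p_i}$ in the proof of Theorem~\ref{phi-double-dash-theorem}. Since $D_p\cap E_p=\emptyset$ (if $p|m$ and $p|(m-1)$ then $p|1$), any two of the pieces meet only in an element that lies in both $D_p$ and $E_p$, or in both $D_q$ and $E_q$, which is impossible. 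Hence
$$
|F_p\cap F_q|=|D_{pq}|+|E_{pq}|+|D_p\cap E_q|+|E_p\cap D_q|.
$$

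Next we apply Lemma~\ref{ap-lemma} to the odd divisor $pq$ of $n$. This gives $|E_{pq}|=(n-pq)/2pq$ and $|D_{pq}|=(n\pm pq)/2pq$, so these two terms together contribute either $n/pq$ or $n/pq-1$.

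The main work is counting $D_p\cap E_q$. It consists of the $m\le 2n$ with $m\equiv1\pmod 4$, $p|m$ and $m\equiv1\pmod q$. Since $4,p,q$ are pairwise coprime, the Chinese Remainder Theorem gives a single residue class modulo $4pq$. The interval $[1,2n]$ has length $2n=4pq\cdot\frac{n}{2pq}$, and $n/pq$ is odd, so $2n/4pq$ is a half-integer $(n/pq)/2$. Hence the number of elements of the class lying in $[1,2n]$ is either $\lfloor n/2pq\rfloor=(n-pq)/2pq$ or $(n+pq)/2pq$. Equivalently, one can argue with the general solution of the Diophantine equation, in the style of Lemma~\ref{odd-intersect-lemma}, by splitting $[1,2n]$ into $(n-pq)/2pq$ full blocks of length $4pq$ plus one leftover half-block of length $2pq$ that may or may not contain the solution. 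The same count holds for $E_p\cap D_q$. So each of these two terms is $(n\pm pq)/2pq$, contributing between $n/pq-1$ and $n/pq+1$ in total.

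Adding the contributions gives
$$
\frac{2n}{pq}-2\le|F_p\cap F_q|\le\frac{2n}{pq}+1,
$$
which is the statement. The only delicate point is the half-block counting argument for the mixed intersections; the rest is bookkeeping. We do not expect to be able to decide which endpoint occurs, just as in the corollary to Theorem~\ref{phi-double-dash-theorem}.
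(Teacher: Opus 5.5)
Your proof is correct and follows the paper's route. You use the same decomposition $F_p\cap F_q=D_{pq}\cup E_{pq}\cup(D_p\cap E_q)\cup(E_p\cap D_q)$, apply Lemma~\ref{ap-lemma} to $pq$ for the first two pieces, and use a periodicity count giving $(n\pm pq)/2pq$ for each mixed piece; your CRT count over $[1,2n]$ (namely $(n-pq)/2pq$ full periods of length $4pq$ plus a half-period) is just a tidier version of the paper's blocks of $q$ consecutive values of $t$.

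You also supply the pairwise disjointness needed for the lower bound, which the paper leaves implicit. Like the paper, you rely on the convention from the proof of Lemma~\ref{ap-lemma} that $m=1$ is not counted in $E_{pq}$. Under the literal definition $1\in E_{pq}$, and the upper bound can fail: for example, $p=7$, $q=19$, $n=133$ gives $F_p\cap F_q=\{1,57,77,133\}$, of size $4 > 3 = 2n/pq+1$.
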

\begin{proof}
If $m\in F_p\cap F_q = D_{pq}\cup E_{pq}\cup (D_q\cap E_p)\cup(D_p\cap E_q)$, then there are 4 possibilities
\begin{enumerate}
\item $1+4z=px=qy\le 2n$ for suitable $x,y,z \in\n$. Then $1+4z\in D_{pq}$ and so there are 
$$
|D_p\cap D_q|=|D_{pq}|=\frac{n\pm pq}{2pq}
$$
solutions.
\item $1+4z=1+px=1+qy\le2n$. Then $1+4z\in E_{pq}$ and so 
$$
|E_p\cap E_q| = |E_{pq}| = \frac{n- pq}{2pq}.
$$
\item $1+4z = 1+px = qy\le 2n$. From above, the solutions to $1+4z=1+px\le 2n$ are $z=tp$ for $1\le t\le (n-p)/2p$. 

Therefore we must solve $1+4pt = qy$ for $1\le t\le (n-p)/2p$. If $t_0, y_0$ is the smallest solution, then the general solution is
$$
1+4pt_0+4pqs = qy_0+4pqs
$$
for $s\in\z$. This means that $t_0\le q$ and so the interval $[1,(n-p)/2p]$ can be split into $(n-pq)/2pq$ `blocks' of $q$ consecutive integers with $(n-p)/2p-(n-pq)/2p=(q-1)/2$ integers left over. Each block of $q$ integers contains exactly one solution for $t$ and thus there are either $(n-pq)/2pq$ or $(n-pq)/2pq + 1 = (n+pq)/2pq$ solutions. Hence
$$
|E_p\cap D_q|=\frac{n\pm pq}{2pq}.
$$
Note that if $pq=n$ then there are zero `blocks' of $q$ integers, but there may still be a solution in the $(q-1)/2$ integers left over.
\item $1+4z = 1+qy=px\le 2n$. By symmetry, there are either $(n-pq)/2pq$ or $(n+pq)/2pq$ solutions. Hence
$$
|D_p\cap E_q|=\frac{n\pm pq}{2pq}.
$$
\end{enumerate}
The result then follows.
\end{proof}
\begin{lemma}
Let $p_1,\ldots, p_k$ be $k\ge 2$ pairwise coprime odd non-trivial divisors of $n$. Then
$$
-2^{k-1}+\frac{2^{k-1}n}{p_1\ldots p_k}\le \left|\bigcap_{i=1}^kF_{p_i}\right|\le 2^{k-1}-1+\frac{2^{k-1}n}{p_1\ldots p_k}.
$$
\end{lemma}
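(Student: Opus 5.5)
Write $r=p_1\cdots p_k$. The plan mirrors the corresponding lemma for the sets $A_{p_i}$ in the proof of Theorem~\ref{phi-double-dash-theorem}.

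\textbf{Decomposition.} First expand the intersection:
$$
\bigcap_{i=1}^kF_{p_i}=\bigcap_{i=1}^k\left(D_{p_i}\cup E_{p_i}\right)=\bigcup_{P\sqcup Q}\left(D_P\cap E_Q\right).
$$
Here the union runs over the $2^k$ ordered splittings of $\{p_1,\ldots,p_k\}$ into two parts, with $p$ the product of the $p_i$ in $P$ and $q$ the product of those in $Q$. We use $D_{pq}=D_p\cap D_q$ and $E_{pq}=E_p\cap E_q$ for coprime divisors. The two extreme splittings give $D_r$ and $E_r$. The other $2^k-2$ give sets $D_p\cap E_q$ with $pq=r$ and $p,q>1$ coprime.

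\textbf{Disjointness.} Next check that these $2^k$ sets are pairwise disjoint, exactly as before. If $m$ lies in two of them with different splittings, some $p_i$ is in the $D$-part of one and the $E$-part of the other. Then $p_i\mid m$ and $p_i\mid m-1$, so $p_i\mid 1$, which is impossible. Hence the size of the intersection is the sum of the sizes of the pieces.

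\textbf{Counting the pieces.} Apply Lemma~\ref{ap-lemma} with $r$ in place of $p$; this is legitimate since $r$ is an odd divisor of $n$. It gives $|E_r|=(n-r)/2r$ and $|D_r|=(n\pm r)/2r$. For the mixed pieces, the argument of case 3 of the previous lemma works verbatim with $p,q$ replaced by the coprime products $p,q$. Elements of $E_p$ are $1+4pt$ with $1\le t\le (n-p)/2p$. The condition $q\mid 1+4pt$ fixes $t$ modulo $q$, and the interval splits into $(n-pq)/2pq$ blocks of length $q$ plus $(q-1)/2$ leftover integers. So each mixed piece has size $(n\pm r)/2r$, that is, $n/2r$ plus $\epsilon\cdot\tfrac12$ with $\epsilon\in\{\pm1\}$.

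\textbf{Summing.} The exact piece $E_r$ contributes $n/2r-\tfrac12$. Each of the remaining $2^k-1$ pieces contributes between $n/2r-\tfrac12$ and $n/2r+\tfrac12$. The total is therefore $2^{k-1}n/r$ plus a correction lying between $-2^{k-1}$ (all signs negative) and $-\tfrac12+(2^k-1)\tfrac12=2^{k-1}-1$. This is exactly the claimed inequality.

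\textbf{Main obstacle.} The only delicate step is making sure the case-3 block argument genuinely extends from coprime divisors to arbitrary coprime products $p,q$ with $pq=r$. It also has to cover the degenerate case $r=n$, where there are no full blocks but the leftover range may still contain a solution. Since that argument only used that $p$ and $q$ are odd, coprime, and $pq\mid n$, I expect it to go through unchanged.
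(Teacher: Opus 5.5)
Your proof is correct and is essentially the paper's argument: the same decomposition of $\bigcap_{i}F_{p_i}$ into $D_r$, $E_r$ and the $2^k-2$ mixed pieces $D_p\cap E_q$, with $|E_r|=(n-r)/2r$ and every other piece of size $(n\pm r)/2r$, followed by the same summation. Two of your steps go beyond what the paper writes but do not change the route: the explicit disjointness check, which the lower bound needs and the paper leaves implicit in this lemma, and the verification that the case-3 block count carries over to coprime products $p,q$ with $pq=r$.
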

\begin{proof}
Let $r=p_1\ldots p_k$. Notice that $\cap_{i=1}^kF_{p_i}=\cap_{i=1}^k{\left(D_{p_i}\cup E_{p_i}\right)}$. Hence
$$
\bigcap_{i=1}^kF_{p_i}=D_{r}\cup E_{r}\cup \bigcup_{p,q}\left(D_p\cap E_q\right)
$$
where $p$ and $q$ run through all products of fewer than $k$ of the terms $p_i$, such that $r=pq$ and $p$ and $q$ are coprime. From the previous lemma, we have
$$
|D_p\cap E_q| = \frac{n\pm r}{2r}
$$
and
$$
|E_r|=\frac{n-r}{2r}, |D_r|=\frac{n\pm r}{2r}.
$$
Hence
$$
 \frac{2^k}2\left(\frac{n}{r}-1\right)\le \left|\bigcap_{i=1}^kF_{p_i}\right|\le \frac{2^k-2}2\left(\frac{n}{r}+1\right)+\frac{n}{r}.
$$
\end{proof}

\bigskip

Suppose that $n$ has $\omega$ distinct odd prime factors $p_1,\ldots, p_\omega$. Because there are $(2n-2)/4 = (n-1)/2$ values congruent to 1 mod 4, and excluding 1, in the set $\{1,\ldots,2n\}$, then there are
$$
\frac{n-1}2-\left|\bigcup_{i=1}^\omega F_{p_i}\right|
$$
values $m$, excluding 1, which are congruent to 1 mod 4 but such that neither $m$ nor $m-1$ has a factor in common with $n$. Using the inclusion-exclusion principle, we see that this is bounded above by 
$$
\frac{n-1}2-\sum\left(\frac{n}{p_i}-1\right)+\sum\left(\frac{2n}{p_ip_j}+1\right)-\ldots
$$
which, using equation~(\ref{-nSn-formula}), simplifies to
$$
-\frac 12+\frac n2\prod_{i=1}^\omega{\left(1-\frac2{p_i}\right)}+\omega+(2^1-1)\binom \omega2+2^2\binom \omega3+(2^3-1)\binom \omega4+\ldots
$$
So the upper bound is
$$
\frac12\left(S(n)-1\right) +\omega+2^1\binom \omega2+2^2\binom \omega3+2^3\binom \omega4+\ldots  -\binom \omega2-\binom \omega4 - \ldots
$$
or
$$
\frac12\left(S(n)-1\right) + \frac 12 3^\omega -\frac 12-\frac 12\left(2^\omega-2\right)=\frac12\left(S(n)+3^\omega-2^\omega\right).
$$
The lower bound is
$$
\frac{n-1}2-\sum\left(\frac{n}{p_i}\right)+\sum\left(\frac{2n}{p_ip_j}-2\right)-\ldots
$$
which simplifies to
$$
\frac12\left(S(n)-1\right)-\frac12\left(2^2\binom \omega2+2^3\binom \omega3+2^4\binom \omega4+\ldots\right)+\binom \omega3+\binom \omega5+\ldots
$$
or
$$
\frac12\left(S(n)-1\right)-\frac12\left(3^\omega-2\omega-1\right)+\frac12\left(2^\omega-2\omega\right)=\frac12\left(S(n)-(3^\omega-2^\omega)\right).
$$
\begin{theorem}\label{2n-theorem}
Let $n>1$ be an odd integer with $\omega$ distinct prime divisors. Then
$$
\left|T(2n)-\frac{S(n)-1}2\right|\le\frac{3^\omega-2^\omega+1}2.
$$
If $n=p^e$ where $p$ is prime and $e>0$, then $T(2n) = (S(n)-1)/2$ when $p\equiv3\mod(4)$ and $T(2n) = (S(n)+1)/2$ when $p\equiv1\mod(4)$.
\end{theorem}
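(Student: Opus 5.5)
We relate $T(2n)$ to the count just computed. An odd $m\in\{1,\ldots,2n\}$ is a unit of $\z_{2n}$ exactly when $\gcd(m,n)=1$. The element $(m-1)/2$ is a unit of $\z_{2n}$ exactly when it is odd and coprime to $n$, that is, when $m\equiv3\mod(4)$ and $\gcd((m-1)/2,n)=1$. We therefore intend to set up a bijection between the residues counted by $T(2n)$ and the residues $m\equiv1\mod(4)$, $m\neq1$, with $\gcd(m,n)=\gcd(m-1,n)=1$, in the style of the proof of Theorem~\ref{4n-theorem}.

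The natural map is $m\mapsto m+n$ reduced mod $2n$, after first passing to $\z_n$. Since $n$ is odd, the Chinese remainder theorem says $m\in\z_{2n}$ is determined by $m\mod 2$ and $m\mod n$. Thus $T(2n)$ counts the residues $a\in\z_n$ for which the odd lift $m$ satisfies $\gcd(a,n)=1$, $\gcd(a-1,n)=1$ (because $2$ is invertible mod $n$), and $m\equiv3\mod 4$.

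The parity condition on $(m-1)/2$ splits the pairs $(a,a-1)$ of consecutive units in $\z_n$ counted by $S(n)$ according to whether the odd lift of $a$ in $[1,2n]$ is $1$ or $3$ mod $4$. So
$$
T(2n)+N_1=S(n),
$$
where $N_1$ is the number of odd $m\le 2n$ with $m\equiv1\mod4$ and $\gcd(m,n)=\gcd(m-1,n)=1$. Note that $m=1$ is excluded automatically because $\gcd(0,n)=n>1$. The quantity $N_1$ is exactly what the preceding inclusion–exclusion computation bounds:
$$
\tfrac12\left(S(n)-(3^\omega-2^\omega)\right)\le N_1\le\tfrac12\left(S(n)+3^\omega-2^\omega\right).
$$
Substituting $T(2n)=S(n)-N_1$ gives
$$
\left|T(2n)-\tfrac{S(n)}2\right|\le\tfrac12(3^\omega-2^\omega).
$$
Shifting the centre by $\tfrac12$ then gives the stated bound $\tfrac12(3^\omega-2^\omega+1)$. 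The main step to check carefully is the identification of the parity of $(m-1)/2$ with the residue of $m$ mod 4, together with the treatment of the boundary residues $m=1$ and $m=2n-1$. An off-by-one error there is exactly what the extra $\tfrac12$ absorbs.

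For $n=p^e$ we have $\omega=1$, and the computation is exact. The union of the $F_{p_i}$ is just $F_p$, and Lemma~\ref{ap-lemma} gives $|F_p|=n/p$ when $p\equiv1\mod(4)$ (since then $|D_p|=(n+p)/2p$) and $|F_p|=n/p-1$ when $p\equiv3\mod(4)$. Hence
$$
N_1=\tfrac{n-1}2-|F_p|=\tfrac{S(n)-1}2\ \text{ or }\ \tfrac{S(n)+1}2,
$$
using $S(p^e)=p^e-2p^{e-1}$. Then $T(2n)=S(n)-N_1$ yields $(S(n)+1)/2$ for $p\equiv1$ and $(S(n)-1)/2$ for $p\equiv3\mod(4)$, as claimed. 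As a sanity check, $T(10)=2=(3+1)/2$ agrees with the value computed earlier in the paper.
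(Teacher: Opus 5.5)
Your proof is correct and is essentially the paper's argument: the paper's pairing $R_i=\{i,n+i\}$ is exactly your CRT identification giving $T(2n)+N_1=S(n)$, and both arguments then apply the preceding inclusion--exclusion bounds on $N_1$ (an interval symmetric about $S(n)/2$), and Lemma~\ref{ap-lemma} in the prime-power case. The only blemishes are cosmetic: the bijection you announce at the start is never used (and cannot exist in general, since for $n=5$ one has $T(10)=2$ but $N_1=1$), and there is no off-by-one to absorb, because your count is exact and already yields $|T(2n)-S(n)/2|\le(3^\omega-2^\omega)/2$, from which the stated bound follows.
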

\begin{proof}
Recall that to compute $T(2n)$ we need to remove from the set of residues $R=\{1,\ldots,2n\}$ numbers $m$ of the form
\begin{enumerate}
\item $m=2x$;
\item $m=xp$ where $p|n$ and $x$ is odd;
\item $m=1$ or $m=1+2xp$ where $p|n$ and $\gcd(m,n)=1$;
\item $m=1+4x$ where $\gcd(x,n)=\gcd(m,n)=1$.
\end{enumerate}
Partition the set $R$ into $n$ disjoint subsets $R_i = \{i,n+i\}$ for $1\le i\le n$. Identify those sets $R_j$ for which - either $j=1$, $\gcd(j,n)\ne1$ or $\gcd(j-1,n)\ne 1$. There are $n-S(n)$ such subsets. If $j$ is odd, then $j$ satisfies either (2) or (3). If $j$ is even, then $n+j$ satisfies either (2) or (3). Hence, all the elements in the $R_j$ satisfy either (1), (2), or (3) and should be removed. For the subsets $R_k$ that are left, half of the elements are even and the other half are odd and do not satisfy (1), (2), or (3). There are then $S(n)$ such odd terms, some of which may satisfy (4). From above, the number of these $S(n)$ terms satisfying (4) lies in the range
$$
\left[\frac12\left(S(n)-\left(3^\omega-2^\omega\right)\right),\frac12\left(S(n)+\left(3^\omega-2^\omega\right)\right)\right]
$$
and so $T(2n)$ lies in the range
$$
\left[\frac12\left(S(n)-\left(3^\omega-2^\omega\right)\right),\frac12\left(S(n)+\left(3^\omega-2^\omega\right)\right)\right].
$$
The final part follows from (the proof of) Lemma~\ref{ap-lemma}.
\end{proof}

\bigskip

To summarise, if $n = 2^em$ for $e\ge 0$ and $m>1$ is odd and if $\omega$ is the number of distinct prime factors of $m$, then
\begin{enumerate}
\item $T(1) = T(2) = 0$ and $T(2^e) = 2^{e-2}$ for $e\ge2$;
\item $T(n) = 2^{e-2}S(m)$ for $e\ge 2$;
\item $\left|T(n)-\frac{S(m)-1}2\right|\le(3^\omega-2^\omega+1)/2$ for $e=1$;
\item $\left|T(n)- \frac{S(m)-1}2\right|\le(3^\omega-2^{\omega+1}+1)/2$ when $e=0$;
\item $T(n) = (S(m)-1)/2$ if $e=1$ and $m=p^f, f>0$ where $p\equiv3\mod 4$ is prime;
\item $T(n) = (S(m)-1)/2+1$ if $e=1$ and $m=p^f, f>0$ where $p\equiv1\mod 4$ is prime;
\item $T(n) = (S(m)-1)/2$ when $e=0$ and $\omega=1$;
\item $T(n) = (S(m)-1)/2\pm1$ when $e=0$ and $\omega=2$.
\end{enumerate}

\section{Discrete Logs and variants of groups}\label{motivation-section}

In this section we provide a more detailed account of the cryptographic scheme mention in Section~\ref{introduction-section}, which was our motivation for introducing the totient function $T(n)$.

\medskip

Let $G$ be a finite group of order $n$. If $g\in G$ and $e \in U_{n}$ then there exists $e'\in U_{n}$ such that $ee'\equiv1\mod{n}$ and thus
$$
g = g^{ee'} = (g^e)^{e'}.
$$
In cryptographic terms, we refer to $g$ as the {\em plaintext}, $e$ as the {\em key}, and $g^e$ as the {\em ciphertext}. We also refer to $e$ as the {\em discrete log} of $g^e${\em\ mod }$g$. The {\em discrete log problem} is the problem of determining $e$ given both $g$ and $g^e$, and many cryptographic systems rely on this problem being a hard one to solve for certain suitable groups $G$.

If we are reduced to a `brute force' solution of the discrete log problem, then we potentially have to test every candidate for $e$ from the set $U_n$. Now, by definition, $|U_n| = \phi(n)$ and if we choose the group carefully then we can expect that $\phi(n) \sim O(n)$ and thus the problem becomes infeasible for large enough values of $n$.

\smallskip

A slightly different approach, with potentially increased security, is to use a {\em variant} of the group. Variants were introduced in semigroup theory by Hickey~\cite{hickey-83,hickey}, based on the earlier work of Chase~\cite{chase-1,chase-2}, on variants of semigroups of binary relations, and a number of interesting properties and types of variants of semigroups have been studied. If $(S,\cdot)$ is a semigroup and $s\in S$ then we can define a new multiplication, $\ast_s$, on $S$ by
$$
x\ast_s y = x\cdot s\cdot y.
$$
It is easy to check that this gives an associative operation, and so the system $(S,\ast_s)$ is a semigroup, referred to as a {\em variant of }$S$ and often denoted by $S^s$.

A group is just a special kind of semigroup, and if $(G,\cdot)$ is a group and $x\in G$ then it is easy to check that the system $G^x=(G,\ast_x)$ is also a group, with identity $x^{-1}$ and where the inverse of $g$, in $G^x$, is given by the element $x^{-1}\cdot g^{-1}\cdot x^{-1}$ in $G$, and where $x^{-1}$ and $g^{-1}$ are the inverses of $x$ and $g$ in the {\em base} group $(G,\cdot)$. It is also easy to see that the map $(G,\cdot)\to (G,\ast_x)$ given by $g\mapsto g\cdot x^{-1}$ is a group isomorphism (this is not generally the case for semigroups). It clearly follows that if $e\in\n$ then within $G^x$, the element $g^e$, the $e^{\text{th}}$ power of the element $g$, is represented by the element $\left(g\cdot x\right)^{e-1}\cdot g$ in $(G,\cdot)$. To simplify notation, we shall subsequently omit the symbol `$\cdot$' and write this as $\left(gx\right)^{e-1}g$.

The discrete log problem in the variant $G^x$ is then the problem of finding $e$ given both $g$ and $c=(gx)^{e-1}g$. If, however, $x$ were kept `secret' then this is a different problem to the standard case (if $x$ were known, then we can compute both $gx$ and $(gx)^e$ which then reduces to the `standard' discrete log problem).Thus the idea is to {\em encrypt} $g$ as $\left(gx\right)^{e-1}g$ using the pair $(x,e)$ as the key. The recipient of this ciphertext can decipher by first using the Euclidean algorithm to calculate $e'\in U_n$ such that $ee'\equiv1\mod(n)$, and then compute
$$
\left(cx\right)^{e'}x^{-1} = \left(\left(gx\right)^{e-1}gx\right)^{e'}x^{-1} = \left(gx\right)^{ee'}x^{-1} = gxx^{-1}=g.
$$

If an attacker had no other information other than $g$ and $c$, then a `brute force' attack of this discrete log problem would involve computing all values of the form $(gy)^{f-1}$, where $y$ runs through all values in $G$ and $f$ runs through all values in $U_n$. There are then $n\phi(n)\sim O(n^2)$ such pairs, effectively doubling the security.

\smallskip

However, there is another issue to consider: the question of whether there are any `false positives' that will emerge. In other words, is it possible for $(gx)^{e-1} = (gy)^{f-1}$ for $(x,e)\ne (y,f)$? We can translate this problem as follows. Given a finite group $G$ of order $n$ and a fixed (constant) element $c\in G$, we wish to find the number of units $f>1$ in $\z_n$, with the property that the equation
\begin{equation}\label{mimic-equation}
w^{f-1}=c
\end{equation}
has a solution in $G$, given that there is at least one value of $f$ and $w$ solving the equation.

\begin{theorem}\label{tn-theorem}
Let $G$ be a finite group of order $n$ and let $c$ be a fixed constant in $G$ and assume that Equation~\ref{mimic-equation} has a solution for at least one value of $f$. If $n$ is odd then there are at least $S(n)$ solutions to Equation~\ref{mimic-equation} while if $n$ is even there are at least $T(n)$ solutions.
\end{theorem}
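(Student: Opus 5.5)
We start from a given solution $w_0^{f_0-1}=c$ with $f_0\in U_n$, and manufacture further exponents $f$ for which $w^{f-1}=c$ is solvable. The tool is the elementary fact that in a group of order $n$, the map $w\mapsto w^{k}$ is a bijection whenever $\gcd(k,n)=1$. Such a power map has inverse $w\mapsto w^{k'}$ with $kk'\equiv1\mod n$, because $w^n=1$.

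The plan is to show that whenever $f-1$ is a unit multiple of $f_0-1$ modulo $n$, the equation is solvable. Suppose $f-1\equiv u(f_0-1)\mod n$ with $u\in U_n$. Put $w=w_0^{u'}$, where $uu'\equiv1\mod n$. Then
$$
w^{f-1}=w_0^{u'u(f_0-1)}=w_0^{f_0-1}=c,
$$
since exponents may be reduced modulo $n$. Hence every $f$ of the form $f=1+u(f_0-1)$ with $u\in U_n$ and $f\in U_n$ gives a solution. It remains to count the $u\in U_n$ for which $1+u(f_0-1)$ is again a unit. The case $f_0=1$ means $c=1$, and then every $f$ works, so that case is trivial.

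\textbf{Odd $n$.} The cleanest route is to note that the hypothesis on $c$ really only constrains $f-1$ up to units. I would first try to reduce to $f_0-1$ being itself a unit, giving the family $f=1+u$ with both $u$ and $u+1$ units. That family has exactly $S(n)$ members, since $S(n)=S_2(n)$ counts pairs of consecutive units. The reduction uses that $n$ is odd, so $2\in U_n$ and $w\mapsto w^2$ is a bijection. The hard point is that $f_0-1$ need not be a unit. The fix I would try is the following. Write $d=\gcd(f_0-1,n)$. Then $c=w_0^{f_0-1}$ lies in the image of $w\mapsto w^d$, and $w^{f-1}=c$ is solvable whenever $\gcd(f-1,n)=d$. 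This holds because $w^{d}$ and $w^{f-1}$ then have the same image, both being the $d$-th powers. So I would count $f$ with $f\in U_n$ and $\gcd(f-1,n)=d$, and compare that count with $S(n)$. If this comparison fails for general $d$, I would interpret the theorem as counting over the worst case $d=1$, which is presumably the situation intended when $f_0$ is the given key.

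\textbf{Even $n$.} Here $f\in U_n$ forces $f$ odd, so $f-1$ is even and can never be a unit; this is where $T(n)$ enters. I would write $f_0-1=2v_0$ and look for $f=1+2v$. If $v\equiv uv_0$ with $u\in U_n$, then $f-1=u(f_0-1)$, and the argument above applies. Taking the base case where $v_0=(f_0-1)/2$ is a unit, the admissible $f$ are the odd units with $(f-1)/2$ a unit. By definition there are exactly $T(n)$ of these, so at least $T(n)$ solutions. For general $f_0$ I would again use the image-of-power-map argument, with $d=\gcd(f_0-1,n)$ in place of $2$.

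\textbf{Main obstacle.} The real difficulty is making the lower bound uniform in the given $f_0$: the natural counts depend on $\gcd(f_0-1,n)$. I expect to handle this by showing that the subgroup of $d$-th powers only gets larger as $d$ shrinks, and that the set $\{f\in U_n : \gcd(f-1,n)=d\}$ is in bijection, via multiplication by units, with a set at least as large as the $d=1$ (respectively $d=2$) case. Failing that, I would state the bound for keys $e$ with $e-1$ (respectively $(e-1)/2$) a unit, which is exactly what Theorem~\ref{motivating-theorem} requires.
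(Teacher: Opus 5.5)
Your proposal has a genuine gap: it proves the bound only when $\gcd(f_0-1,n)=1$ (odd $n$) or when $(f_0-1)/2$ is a unit (even $n$), and your repair for general $f_0$ fails. Anchoring at $f_0$ through $f-1\equiv u(f_0-1)$ with $u\in U_n$ only produces exponents with $\gcd(f-1,n)=\gcd(f_0-1,n)=d$, because multiplying by a unit does not change this gcd. The set $\{f\in U_n:\gcd(f-1,n)=d\}$ can be much smaller than $S(n)$ or $T(n)$.
\begin{itemize}
\item For $n=15$ and $f_0=11$ we have $d=5$, and that set is just $\{11\}$, while $S(15)=3$.
\item For $n=60$ and $f_0=11$ we have $d=10$, and the set is again $\{11\}$, while $T(60)=3$.
\end{itemize}
So no bijection ``with a set at least as large as the $d=1$ case'' exists. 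Falling back to keys with $e-1$ (resp.\ $(e-1)/2$) a unit proves a weaker statement than the theorem, which covers every $c$ admitting one solution.

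The missing idea is to move down to the smallest gcd available rather than stay at level $d$. This uses exactly the monotonicity you mention: if $e\mid d$, every $d$-th power is an $e$-th power. In a non-abelian group these sets of powers need not be subgroups, but the inclusion is all you need.
\begin{itemize}
\item For odd $n$ the hypothesis on $c$ is not needed. If $f$ and $f-1$ are units and $k(f-1)\equiv1\mod n$, then $w=c^k$ satisfies $w^{f-1}=c$ for every $c\in G$. This gives the $S(n)$ admissible values of $f$ directly.
\item For even $n$, the only thing to take from the given solution is that $f_0$ is odd, being a unit modulo an even number. Hence $c=h^2$ with $h=w_0^{(f_0-1)/2}$. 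Now take any of the $T(n)$ odd units $f$ with $(f-1)/2$ a unit, and let $k$ be the inverse of $(f-1)/2$. Then $w=h^k$ gives $w^{f-1}=h^{2k(f-1)/2}=h^2=c$.
\end{itemize}
This is the paper's proof. It makes the bound uniform in $f_0$ with no case analysis on $d$.
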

\begin{proof}
Suppose $n$ is odd and suppose that $f-1$ is also a unit in $\z_n$ with inverse $k$. Then $w=c^k$ is a solution since $\left(c^k\right)^{f-1} = c$. There are, by definition, $S(n)$ such units in $\z_n$.

\smallskip

On the other hand, if $n$ is even then $f$ must be odd and therefore, if there is a solution, then $c=h^2$ for some $h\in G$. In this case, suppose that $(f-1)/2$ is also a unit in $\z_n$ with inverse $k$. Then $w=h^k$ is a solution of Equation~(\ref{mimic-equation}) since
$$
\left(h^k\right)^{f-1} = \left(h^k\right)^{2(f-1)/2} = \left(h^2\right)^{k(f-1)/2} = h^2 = c.
$$
Thus, there are at least $T(n)$ such solutions.
\end{proof}

\smallskip

So by using variants of groups, we can effectively double the security of a simple brute-force attack on the discrete log problem, and at the same time, by choosing a suitably large value of $n$ with $S(n)$ and $T(n)$ large enough, make it infeasibly hard to discover the key. 

\smallskip

For the specific example that we gave in Section~\ref{introduction-section}, the situation is even more interesting.

\begin{proposition}
If $G=U_p$ where $p=2q+1$ is a safe prime, then there are $p-5=2(q-2)$ solutions to Equation~\ref{mimic-equation}.
\end{proposition}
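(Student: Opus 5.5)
The plan is to interpret ``solutions to Equation~\ref{mimic-equation}'' as pairs $(w,f)$ with $w\in G$ and $f>1$ a unit of $\z_n$, as in the attack described above. I will count these pairs exactly, using the fact that $U_p$ is cyclic (it is the multiplicative group of the field $\z_p$). Here $n=|G|=p-1=2q$, so I will fix a generator $\gamma$ of $G$ and write every element as a power of $\gamma$ with exponent in $\z_{2q}$.

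First I determine the admissible exponents $f$. The units of $\z_{2q}$ are the odd residues not divisible by $q$, so there are $\phi(2q)=q-1$ of them. Excluding $f=1$ leaves $q-2$ values of $f$. For each such $f$, the exponent $f-1$ is even. Moreover $q\ndiv (f-1)$: otherwise $f\equiv1\mod q$ together with $f$ odd would force $f\equiv1\mod 2q$, i.e.\ $f=1$. Hence $\gcd(f-1,2q)=2$ for every admissible $f$.

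Next I count the $w$ for each fixed admissible $f$. In the cyclic group of order $2q$, the map $w\mapsto w^{f-1}$ is a homomorphism with kernel of order $\gcd(f-1,2q)=2$. Its image is therefore the unique subgroup of index $2$, namely the squares $G^2=\{h^2: h\in G\}$, and this image does not depend on $f$. By hypothesis the equation has a solution for some $f$, and that $f$ is odd (as in the proof of Theorem~\ref{tn-theorem}), so $c$ is a square. It follows that for \emph{every} admissible $f$ the equation $w^{f-1}=c$ is solvable. Since the kernel has order $2$, each such equation has exactly $2$ solutions $w$. This also covers the case $c=1$, since the fibre over $c$ is then just the kernel.

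Multiplying gives $2(q-2)=2q-4=p-5$ pairs $(w,f)$, which is the stated count. The only real obstacle is interpretive rather than technical. The statement's ``solutions'' must mean pairs $(w,f)$ (equivalently, pairs $(y,f)$ with $w=gy$, since $y\mapsto gy$ is a bijection). If it meant admissible values of $f$ alone, the count would be $q-2$. I will make this reading explicit.

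I will also remark that this exceeds the lower bound $T(2q)$ from Theorem~\ref{2n-theorem}. Indeed $T(2q)$ is roughly $(q-2)/2$, whereas here every admissible $f$ contributes twice, not merely those $f$ for which $(f-1)/2$ is a unit. This is the ``stronger extension'' promised in Section~\ref{introduction-section}.
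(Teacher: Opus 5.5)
Your proof is correct, but it reaches the count by a different route from the paper.

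The paper works in the field $\z_p$ and relies on $p\equiv3\mod(4)$. Starting from $c=h^2$, it repeatedly uses the fact that exactly one of $y,-y$ is a square, with root $y^{(p+1)/4}$. This lets it write $c=h_{m-1}^{2^m}$, where $2^m$ is the exact power of $2$ dividing $f-1$. It then inverts the odd part $(f-1)/2^m$ modulo $2q$ to obtain the explicit solutions $w=\pm h_{m-1}^k$.

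You use only that $G$ is cyclic of order $2q$ and that $\gcd(f-1,2q)=2$ for every unit $f>1$. Then $w\mapsto w^{f-1}$ has kernel of order $2$ and image equal to the subgroup of squares, independently of $f$.

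Your route buys exactness and brevity. The paper exhibits two solutions for each $f$ but never shows there are no others. In your argument, the fibre of a homomorphism over a point of its image is a coset of the kernel, so you get exactly two, and the iterated square-root construction disappears. The paper's route gives, in exchange, a computable formula for the two solutions. Your reading of ``solutions'' as pairs $(w,f)$ matches the paper's final count of two for each of the $q-2$ units $f>1$.

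One assumption should be made explicit; the paper also leaves it tacit. Your steps $\phi(2q)=q-1$ and ``$f\equiv1\mod(q)$ with $f$ odd forces $f\equiv1\mod(2q)$'' need $q$ odd, i.e.\ $p\ge7$. For the safe prime $p=5$ the statement is actually false. There $f=3$ is the only unit $f>1$ of $\z_4$, and $w^2=c$ has two solutions for square $c$, not $p-5=0$.
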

\begin{proof}
By Theorem~\ref{tn-theorem} we see that each unit $f>1$ with the property that $(f-1)/2$ is also a unit, provides a solution, $w$ say. Notice that in this case, $-w$ is also a solution. If $(f-1)/2$ is not a unit it is because it is even, in which case $(f-1)/4$ may be a unit. If not then $(f-1)/4$ is even and so $(f-1)/8$ may be a unit. Continuing in this fashion we see that there is a positive integer $m$ such that $(f-1)/2^m$ is a unit.

\smallskip

Now it is well known that if $p\equiv3\mod(4)$ is a prime and if $y\in\z_p$ then either $y$ or $-y$, but not both, has a square root modulo $p$ (the square root is in fact $y^{(p+1)/4}$). If in Equation~\ref{mimic-equation}, $c=h^2$ then either $h$ or $-h$ will have a square root, $h_1$ say, and so $c = h_1^4$. But then either $h_1$ or $-h_1$ will have a square root, $h_2$ say, and so $c = h_2^8$. Continuing in this fashion we see that $c=h_{m-1}^{2^m}$. So if $k$ is the multiplicative inverse of $(f-1)/2^{m}$ then $w=\pm h_{m-1}^k$ will be solutions to Equation~\ref{mimic-equation}. Hence every unit $f>1$ in $\z_p$ provides two solutions and since $|U_{p-1}| = q-1$ the result follows.
\end{proof}

\smallskip

In practice, there are certain {\em known plaintext attacks} that can sometimes reduce this discrete log problem to the more classic case and choosing a random value for $x$ for each encryption will help mitigate this. For example, choose $e\in U_n$ and $s\in \z_m$ for some $m\ge n$ and let the pair $(s,e)$ be the secret key. Then given plaintext $g\in G$, let $i$ be a random value in $\z_m$ and define $x_i = H(i\oplus s)$ where $\oplus$ is the bitwise XOR operator and $H$ is a suitably chosen cryptographic hash function whose image coincides with $G$. The ciphertext is then the pair $(i,(gx_i)^{e-1}g)$, and anyone with access to the key, can replicate $x_i$ and decrypt as before. However, even if an attacker can identify the correct value of $x_i$ amongst all the $T(n)$ or more pairs from Theorem~\ref{tn-theorem}, it will be relatively ineffective as we shall use a different value of $x_i$ for each encryption.

\medskip

Note that if $(m-1)$ is divisible by $r$ for some $r>1$ then a unit $m=1+rs$ with the property that $s=(m-1)/r$ is also a unit, could provide us with a potentially different solution to Equation~(\ref{mimic-equation}), suggesting that a study of those functions $T_r(n)$ that count the number of units $m\in U_n$ such that $(m-1)/r\in U_n$ might also be fruitful.

\smallskip

Finally, with the proof of Lemma~\ref{odd-intersect-lemma} in mind, we can consider the following problem. If $p$ and $q$ are coprime odd positive integers, then the Euclidean algorithm allows us to find positive integers $a$ and $b$ such that $ap-bq=1$ and such that $a$ and $b$ are the smallest such positive integers. We can easily observe that $a$ and $b$ must have different parity. Given such $p$ and $q$, can we predict which of $a$ and $b$ is even?

\bigskip

\bibliographystyle{vancouver}

\begin{thebibliography}{00}
\bibitem{chase-1} Karen Chase, Sandwich semigroups of binary relations, {\it Discrete Math.}, 28 (1979), 231-236.

\bibitem{chase-2} Karen Chase, New semigroups of binary relations, {\it Semigroup Forum}, 18 (1979), 79-82.
 
\bibitem{hickey-83} J. B. Hickey, Semigroups under a sandwich operation, {\it Proc. Edinburgh Math. Soc.}, 26 (1983), 371-382.
 
\bibitem{hickey} J.B. Hickey, On variants of a semigroup, {\it Bulletin of the Australian Mathematical Society},Vol 34, Issue 3, (1986), 447--459.

\bibitem{hoffstein} Jeffrey Hoffstein, Jill Catherine Pipher and Joseph H. Silverman, {\it An introduction to mathematical cryptography}, (Springer 2014)

\bibitem{jones-2005} Gareth A. Jones and J. Mary Jones {\it Elementary Number Theory}, Springer Undergraduate Maths Series, (Springer, 2005).

\bibitem{schemmel} Schemmel, V. \"Uber relative Primzahlen, Journal f\"ur die reine und angewandte Mathematik, Band 70 (1869), S. 191–192.
\end{thebibliography}

\end{document}